\newtheorem{thm}{Theorem}[section]
\theoremstyle{remark}
\newtheorem{rem}[thm]{Remark}
\theoremstyle{definition}
\def\eps{\varepsilon}
\def\bu{{\bar{u}}}
\newcommand{\comment}[1]{}
\let\oldmarginpar\marginpar
\renewcommand\marginpar[1]{\-\oldmarginpar[\raggedleft\footnotesize #1]
{\raggedright\footnotesize #1}}
\author{
Jordi-Llu\'is Figueras
\thanks
{
Department of Mathematics, Uppsala University, 
Box 480, 75106 Uppsala, Sweden ({\tt figueras@math.uu.se}).
}
\and 
Marcio Gameiro
\thanks{
Instituto de Ci\^{e}ncias Matem\'{a}ticas e de Computa\c{c}\~{a}o,
Universidade de S\~{a}o Paulo, Caixa Postal 668, 13560-970,
S\~{a}o Carlos, SP, Brazil ({\tt gameiro@icmc.usp.br}).
}
\and 
Jean Philippe Lessard 
\thanks{ 
Universit\'e Laval, D\'epartement de Math\'ematiques et de Statistique, 1045 avenue de la M\'edecine, Qu\'ebec, QC, G1V0A6,
Canada ({\tt jean-philippe.lessard@mat.ulaval.ca}.)}
\and
Rafael de la Llave 
\thanks
{
School of Mathematics, Georgia Institute of Technology, 
686 Cherry St NW, Atlanta, GA 30332, United States ({\tt rll6@math.gatech.edu}).
}
}
\title{A Framework for the Numerical Computation and a Posteriori Verification of Invariant Objects 
of Evolution Equations}
\begin{document}

\maketitle

\begin{abstract}
We develop a theoretical framework for computer-assisted proofs of the
existence of invariant objects in semilinear PDEs.  The invariant objects
considered in this paper are equilibrium points, traveling waves, periodic
orbits and invariant manifolds attached to fixed points or periodic orbits.
The core of the study is writing down the invariance condition as a zero of an
operator.  These operators are in general not continuous, so one needs to
smooth them by means of preconditioners before  classical fixed point theorems
can be applied. We develop in detail all the aspects of how to work with these
objects:  how to precondition the equations, how to work with the nonlinear
terms, which function spaces can be useful, and how to work with them in a
computationally rigorous way.  In two companion papers, we present two
different implementations of the tools developed in this paper to study
periodic orbits.
\end{abstract}

\begin{center}
{\bf \small Keywords} \\ \vspace{.05cm}
{ \small Evolution equation $\cdot$ Periodic Orbits $\cdot$ Contraction mapping \\
Invariant Manifolds $\cdot$ Rigorous Computations $\cdot$ Interval Analysis}
\end{center}

\begin{center}
{\bf \small Mathematics Subject Classification (2010)} \\ \vspace{.05cm}
{ \small 35B32 $\cdot$ 35R20 $\cdot$ 47J15 $\cdot$  65G40 $\cdot$ 65H20 }
\end{center}

\section{Introduction}
This paper establishes the theoretical background  for the computation 
and rigorous verification of different types of invariant solutions for evolution PDEs. 
It is accompanied by two papers \cite{ks_jll_r,ks_jp_marcio} providing independent 
implementations for periodic orbits for the Kuramoto-Sivashinsky PDE.
The choices of spaces, algorithms and implementations are different.
Moreover, paper \cite{ks_jp_marcio} shows how to perform continuation with 
respect to parameters while paper  \cite{ks_jll_r} establishes lower bounds of analyticity
of solutions.

The goal of this paper is to present an approach to the existence of invariant
objects in evolution partial differential equations which can be used to obtain
computer-assisted proofs validating numerical computations. In this paper we
only present some unified strategy that works in several problems. Of course,
in concrete problems one needs to present many more details, but we hope that 
the unified remarks can be used as a systematic guide to understand seemingly 
disparate papers.

We   consider fixed points, periodic traveling waves, periodic solutions and
invariant manifolds attached to them.  Eventually, the study of invariant
manifolds could lead to the study of homoclinic or heteroclinic connections.

The general philosophy is that we formulate the existence of these invariant
objects as the existence of solutions of a functional equation. These
functional equations are shown to have solutions by applying a constructive
version of a fixed point theorem (e.g. contraction mapping in this paper). The
main ingredients of the theorem  is the existence of an approximate fixed
point, as well as some non-degeneracy conditions. One of the byproducts of the
application of such results is local uniqueness.

The approximate solutions can be the product of a numerical computation. It is
then important that one can verify the conditions of the theorem with a finite
computation with finite precision. If these computations are carried out on an
actual computer, we can validate the computation and obtain a
\emph{computer-assisted proof}.

From the point of view of computer-assisted proofs, it is important that the
conditions of the theorem can be verified by a finite computation.  This
entails, in particular, reducing the problem to a finite dimensional problem
and devising theoretically explicit estimates for the truncation errors.  These
studies of the truncation depend very much on the structure of the operators
and the spaces on which we consider them acting. Hence, we  present only some
cases. 

Similar strategies have been already applied in different contexts.
Constructive fixed point theorems were used in the study of Renormalization
Group operators \cite{Collet_Eckmann_Lanford, Eckmann_Koch_Wittwer}, and in
finite dimensional invariant manifolds in
\cite{MR2821596,Mireles_Mischaikow_2013,MR2494688}.

In this paper we  show how to give constructive proofs (computer-assisted) of
existence of some special orbits (fixed points, traveling waves, periodic
orbits and some stable/unstable manifolds) of parabolic equations of the form 
\begin{equation}\label{eq: evolution}
\partial_t u = Lu+N(u), 
\end{equation}
where $L$ is an elliptic operator and $N$ is a semilinear operator (of lower
order than $L$).  Without loss of generality (using translations in $u$ and
redefining $L$), we  assume $N(0) =0$, $DN(0)=0$.  We  make it precise later on
in what sense $DN$ should be understood. In the theoretical development, 
the unknown $u$ could take values in $\mathbb R^n$, even if the main examples 
here are with $n=1$.

This evolution equation is supplemented with boundary conditions. We  impose
boundary conditions by considering equation \eqref{eq: evolution} as defined in
an appropriate Banach space of functions satisfying the boundary conditions.
Indeed the choice of spaces is very important (see Section~\ref{section: Banach
spaces}). Sometimes, we require the existence of a scalar product defined in the
space. {From} the more theoretical point of view, the spaces encode not only
the boundary conditions but also the regularity properties of the solutions
obtained. To perform the analysis, we require that the norm of the spaces is
such that one can study comfortably the operators that appear in the problem
(differential operators, their inverses, etc)  and that they are Banach
algebras under multiplication. To be able to do efficient numerical analysis,
we  also require that the norms of vectors and of linear operators can be
computed accurately and efficiently, and that truncations are easy to estimate.
Some choices of spaces that satisfy these theoretical and practical
requirements are discussed in Section~\ref{section: Banach spaces}.  At some
point, we  find it useful to use the \emph{two spaces approach} of
\cite{Henry81}. 

A concrete example of equation \eqref{eq: evolution} that is worth keeping in
mind and which is used in the concrete numerical implementations, 
see \cite{ks_jp_marcio, ks_jll_r}, is the
Kuramoto-Sivashinsky equation \cite{laqueyetaltri1975, Cohenetaltri1976,
Sivashinksy77, Kuramoto76},
\begin{equation} \label{eq: KS} 
\partial_t u = \alpha \partial_{x}^2  u  +  \partial_x^4 u + u \partial_x u, 
\end{equation}
which we  supplement with periodic boundary conditions in $x$ (possibly also
with the requirement that the solution is odd in $x$). The real number $\alpha$
is a parameter of the problem. In the literature, one can find different forms
of the equation which are equivalent to this one by changing the units of
space, time and $u$. 

Note that the evolution equation \eqref{eq: evolution} is infinite dimensional
and that both $L$ and $N$ are unbounded (in the case of \eqref{eq: KS},
$L(u)=(\alpha\partial_x^4+\partial_x^2)u$ and $N(u)=u\partial_x u$). Hence, one
cannot directly apply the methods of ordinary differential equations, and even
existence of a semi-flow requires methods appropriate for PDEs \cite{Henry81,
Showalter97, Sell_You_book, Robinson_book}.

However, it is worth remarking that, since we are only aiming at obtaining some
particular solutions, we do not need that the evolution is defined nor
well-posed.  Indeed, methods similar to the ones in this paper have been
applied to obtain existence of some specific solutions  in some ill-posed
equations such as the Boussinesq equation and the Boussinesq system
\cite{Boussinesq,Llave08, Fontich_delaLlave_Sire_13,CGL_Boussinesq} 
(numerical computations for some of these problems is work in progress). 
Similar ideas work for delay differential equations, neutral delay equations (with advanced
and retarded delays) and for fractional evolution PDEs. The
study of some specific solutions such as traveling waves, even in ill-posed
equations, has been common-place for a while.

It is important to mention that the methods considered here are not the only
possibilities. Indeed, periodic orbits of equation \eqref{eq: KS} have been
considered in \cite{LanChandreCvitanovic2006, LanCvitanovic2008} and methods
for computer-assisted proofs based on topological methods appeared in
\cite{MR2067140,MR2136516,Piotr_KS_periodic_orbits} and other functional
methods based on propagating and finding fixed points for the time $T$ map
appeared in \cite{ArioliKoch2}. Each method has its own set of practical
difficulties, e.g. propagation methods have difficulties with stiffness,
Fourier discretizations have difficulties when the solutions develop shocks.
All methods share the \emph{curse of dimensionality}.  Hence, it is important
that our methods use only functions of the same number of variables than the
objects they aim to compute.

Let us conclude by mentioning a short list of problems of invariant objects in
PDEs that are not treated in this paper but we believe could be interesting to
attack.  These objects consist of quasi-periodic tori, normally hyperbolic
invariant manifolds, connecting orbits, inertial manifolds and invariant
foliations.

The rest of the paper is organized as follows. In Section~\ref{section: fixed
points}, we present general fixed point theorems for functional equations.  In
Section~\ref{sec:invariance equation}, we reduce the existence of the dynamical
objects to a functional equation and show how to manipulate them in such a way
that they are reduced to a constructive fixed point theorem. In
Section~\ref{section: Banach spaces}, we introduce some basis, spaces and norms
which are useful to perform the analysis. In Section~\ref{sec:guideline}, we
present some guidelines for the implementation of the computer-assisted
verification of the invariant objects following the framework presented in this
paper. 

\section{Fixed point theorems for functional equations}
\label{section: fixed points}

In this section we present general fixed point theorems for functional
equations.  These theorems are useful for the implementation of numerical
schemes and rigorous computer-assisted proofs of the invariance equations for
the different invariant objects treated in this paper. 

Fixed point theorems are classical in the literature. In this section we
present them tailored for the equations we want to work with. Also, in this
section, we present some other tools that we  need while working with the
invariance equations. Sometimes, these equations are underdetermined, so we
impose extra conditions to have local uniqueness of solutions. Also, while
imposing conditions for the attached invariant manifolds, eigenvalue problems
arise, which we transform into a fixed point equation.

\subsection{Lipschitz theorems with preconditioning}\label{subsection: fixed points}

Here we describe some rather general manipulations that are frequently used in
the constructive study of nonlinear equations.  We begin by describing the
manipulations formally and postpone the discussions about ranges, domains and
boundedness of the operators for later. 

We start with an equation 
\begin{equation}\label{eq: functional}
F(u) = 0. 
\end{equation}
Given any injective operator $M$ equation \eqref{eq: functional} is
equivalent to 
\begin{equation}\label{eq: fixed point}
T(u)=u+MF(u)=u.
\end{equation}
\begin{rem}
Notice that we only need that $M$, often called the {\em preconditioner}, is
injective and not boundedly invertible.
\end{rem}

It is important to note that the above manipulations could make sense even when
$F$ and $M$ are unbounded. The only thing that we need is that $M F$ is a well
defined $C^2$ operator.                                                  

A case that we  consider is $F(u)=Lu+N(u)$, where both $L$ and $N$ are
differential operators, hence unbounded. More precisely, $L$ is an invertible
elliptic linear operator and $N$ is another (nonlinear) differential operator.
If the order of $L$ is higher than that of $N$ (the semilinear case), the
operator $L^{-1}N$ is bounded (indeed compact). The choice of $M$ is rather
flexible and there are many choices. In this case $M=L^{-1}$ is suitable, but
sometimes more convenient approximations are considered.

The next two theorems provide sufficient conditions to prove that an operator
is a contraction, and therefore that it has a unique fixed point.  Also, this
theorem provides estimates of the size of the correction term $\delta u$.

\begin{thm}
\label{thm: contraction 2} Consider the operator $F$ in equation \eqref{eq:
functional} defined in $\overline{B_{\rho}(\bu)} = \{ u : \|u-\bu\| \le \rho
\}$, with $\rho > 0$. Denote 
$N_{\bu}$ the nonlinear part of the operator $F$  at the point $\bu$, that is
\[
N_{\bu}(z)=F(\bu+z)-F(\bu)-DF(\bu)z.
\]
Let $B$ be a linear operator such that $BDF(\bu)$, $BF$
and $BN_{\bu}$ are continuous operators.  If, for some $b, K > 0$ we have:

\begin{enumerate}[label=(\alph*),ref=(\alph*)]

\item
\label{item: 1} 
$\|I-BDF(\bu)\| = \alpha < 1$;

\item 
\label{item: 2} 
$\|B\left(F(\bu)+N_{\bu}(z)\right)\|\leq b$ whenever $\|z\| \leq \rho$;

\item 
\label{item: 3} 
$\text{Lip}_{\|z\| \leq \rho} B N_{\bu}(z) < K$;

\item 
\label{item: 4} 
$\frac{b}{1-\alpha} < \rho$;

\item 
\label{item: 5} 
$\frac{K}{1-\alpha} < 1$. 
\end{enumerate}

then there exists $\delta u$ such that $\bu+\delta u$
is in $\overline{B_{\rho}(\bu)}$ and  
is a unique solution of equation 
\eqref{eq: fixed point}, 
with $\|\delta u\| \leq \frac{\|B F(\bu)\|}{1-\alpha-K}$.

\end{thm}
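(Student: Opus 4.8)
The plan is to recast the problem as a genuine contraction on the closed ball $\overline{B_\rho(\bu)}$. Writing $u = \bu + z$, equation \eqref{eq: fixed point} with preconditioner $M = B$ becomes the fixed point equation $z = \mathcal{T}(z)$, where
\[
\mathcal{T}(z) = z + B F(\bu + z) = z + B\bigl(F(\bu) + DF(\bu)z + N_{\bu}(z)\bigr) = (I - B DF(\bu))z + B\bigl(F(\bu) + N_{\bu}(z)\bigr).
\]
So I would first verify that $\mathcal{T}$ maps $\overline{B_\rho(0)}$ into itself. For $\|z\| \le \rho$, hypothesis \ref{item: 1} gives $\|(I - BDF(\bu))z\| \le \alpha\|z\| \le \alpha\rho$, and hypothesis \ref{item: 2} gives $\|B(F(\bu) + N_{\bu}(z))\| \le b$; hence $\|\mathcal{T}(z)\| \le \alpha\rho + b$, which is $\le \rho$ precisely because of \ref{item: 4}, i.e. $b/(1-\alpha) < \rho$ (I'd note the strict inequality gives $\alpha\rho + b < \rho$, so even the open ball is preserved, but the closed ball suffices).

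Next I would establish the contraction estimate. For $z_1, z_2 \in \overline{B_\rho(0)}$,
\[
\mathcal{T}(z_1) - \mathcal{T}(z_2) = (I - BDF(\bu))(z_1 - z_2) + \bigl(B N_{\bu}(z_1) - B N_{\bu}(z_2)\bigr),
\]
so by \ref{item: 1} and \ref{item: 3},
\[
\|\mathcal{T}(z_1) - \mathcal{T}(z_2)\| \le (\alpha + K)\|z_1 - z_2\|.
\]
Here I should check that $\alpha + K < 1$: from \ref{item: 5} we have $K < 1 - \alpha$, i.e. $\alpha + K < 1$. Thus $\mathcal{T}$ is a contraction with constant $\alpha + K$ on the complete metric space $\overline{B_\rho(0)}$ (closed subset of a Banach space), and the Banach fixed point theorem yields a unique $\delta u := z^* \in \overline{B_\rho(0)}$ with $\mathcal{T}(z^*) = z^*$. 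Translating back, $\bu + \delta u$ is the unique solution of \eqref{eq: fixed point} in $\overline{B_\rho(\bu)}$.

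Finally, for the size estimate on $\|\delta u\|$, I would use the standard a posteriori bound for contractions applied at the base point $z = 0$: since $\mathcal{T}(0) = B F(\bu)$,
\[
\|\delta u\| = \|z^*\| \le \frac{\|\mathcal{T}(0) - 0\|}{1 - (\alpha + K)} = \frac{\|B F(\bu)\|}{1 - \alpha - K},
\]
using $\|z^* - 0\| \le \|z^* - \mathcal{T}(0)\| + \|\mathcal{T}(0)\| = \|\mathcal{T}(z^*) - \mathcal{T}(0)\| + \|\mathcal{T}(0)\| \le (\alpha+K)\|z^*\| + \|BF(\bu)\|$ and solving for $\|z^*\|$.

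I expect no serious obstacle in the argument; it is essentially a bookkeeping exercise packaging the Banach contraction principle. The one point deserving care is the interplay between the hypotheses: one must make sure that \ref{item: 2} is invoked only for the combination $B(F(\bu) + N_{\bu}(z))$ rather than for $BF(\bu)$ and $BN_{\bu}(z)$ separately (the latter may not be individually controlled), and that the denominator $1 - \alpha - K$ is positive, which is exactly what \ref{item: 5} guarantees. One should also make explicit at the outset why $\mathcal{T}$ is well-defined on the ball — this is where the continuity hypotheses on $BDF(\bu)$, $BF$, and $BN_{\bu}$ enter, ensuring all the terms above make sense and the norms appearing in \ref{item: 1}--\ref{item: 3} are finite.
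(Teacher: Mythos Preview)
Your overall strategy is sound, but there is a sign slip in the very first displayed formula that, if left uncorrected, makes the rest of the argument fail. You define $\mathcal{T}(z)=z+BF(\bu+z)$ and then expand it as $(I-BDF(\bu))z+B(F(\bu)+N_{\bu}(z))$; however
\[
z+BF(\bu+z)=z+B\bigl(F(\bu)+DF(\bu)z+N_{\bu}(z)\bigr)=(I+BDF(\bu))z+B\bigl(F(\bu)+N_{\bu}(z)\bigr),
\]
and $\|I+BDF(\bu)\|$ is close to $2$, not to $\alpha$. The fix is to take the preconditioner with the opposite sign, i.e.\ set $\mathcal{T}(z)=z-BF(\bu+z)$ (equivalently $M=-B$ in \eqref{eq: fixed point}); then the expansion truly is $(I-BDF(\bu))z-B(F(\bu)+N_{\bu}(z))$, and every subsequent estimate you wrote---the self-map via \ref{item: 2} and \ref{item: 4}, the contraction constant $\alpha+K<1$ via \ref{item: 1}, \ref{item: 3}, \ref{item: 5}, and the a~posteriori bound $\|BF(\bu)\|/(1-\alpha-K)$---goes through verbatim. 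The fixed point still satisfies $BF(\bu+\delta u)=0$, i.e.\ \eqref{eq: fixed point}.

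With that correction your route differs from the paper's: the paper works with the Newton-like map $u\mapsto\bu-(BDF(\bu))^{-1}B(F(\bu)+N_{\bu}(u-\bu))$, which requires inverting $BDF(\bu)$ (possible by \ref{item: 1} and Neumann series) and yields contraction constant $K/(1-\alpha)$; you use the simpler Picard-type map with contraction constant $\alpha+K$. Both constants are $<1$ under the hypotheses, and both lead to exactly the same final bound $\|BF(\bu)\|/(1-\alpha-K)$. Your version is slightly more elementary in that it avoids the explicit inversion of $BDF(\bu)$, while the paper's version separates the linear and nonlinear contributions more cleanly (the contraction constant depends only on the nonlinear Lipschitz bound $K$).
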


The proof is based on a standard application of the Banach fixed point theorem.
We include it for the sake of completeness.

\begin{proof}
We should check that the map 
\begin{equation}
\label{eq: contracting map}
u\rightarrow \bu-(BDF(\bu))^{-1}B(F(\bu)+N_\bu(u-\bu))
\end{equation}
applies the ball $\overline{B_{\rho}(\bu)}$ to itself and is contracting. A
fixed point of this map is equivalent to a solution of the equation 
\[
F(u+\bu)=0.
\]

Condition \ref{item: 1} implies that 
the linear operator $BDF(\bu)$ is invertible, and 
the norm of its inverse is bounded above by 
$
\frac1{1-\alpha}.
$
Condition \ref{item: 4} implies that the map maps $\overline{B_{\rho}(\bu)}$
to itself, and condition \ref{item: 5} that it is a contraction with Lipschitz 
constant 
$
\frac K{1-\alpha}.
$

Finally, the Banach fixed point theorem assures that there is a unique 
solution $u_*=\bu+\delta u$
in $\overline{B_{\rho}(\bu)}$. The last assertion follows from 
the fact that if we iterate the contracting map \eqref{eq: contracting map}
starting from the point $u_0=\bu$, then
\[
\|\delta u\|=\|u_0-u_*\|\leq \frac{1}{1-\frac K{1-\alpha}}
\|u_0-u_1\|=
\frac{1}{1-\frac K{1-\alpha}}
\|(BDF(\bu))^{-1}BF(\bu)\|\leq \frac 1{1-\alpha-K}\|BF(\bu)\|.
\]

\end{proof}

\subsubsection{The radii polynomial approach}

It may appear at first that Theorem \ref{thm: contraction 2} is not suitable
for numerical computations because is it not readily clear how to find $\rho$.
However there are explicit choices for the constants in the theorem. One
example of how to choose the constants is provided by the classical
Newton-Kantorovich Theorem. Another example, that is a refinement of the
latter, is the \textit{radii polynomial} approach
\cite{MR2338393,MR2776917,MR3077902}.

\begin{thm} \label{thm: contraction 3}
Let $T$ in \eqref{eq: fixed point} be $C^1$, let $\rho >0$ and
$\overline{B_{\rho}(\bu)} = \{ u : \|u-\bu\| \le \rho \}$.  Consider bounds
$\eps$ and $\kappa = \kappa(\rho)$ satisfying 
\begin{align}
\| T(\bu) - \bu \| & \le \eps \label{eq: eps bound} \\ \sup_{w \in
\overline{B_{\rho}(\bu)}} \| DT(w) \|  &\le \kappa(\rho) \label{eq: Z bound}.
\end{align}
If 
\begin{equation} \label{eq: condition contration 3}
\eps + \rho \kappa(\rho) < \rho 
\end{equation}
then $T:\overline{B_{\rho}(\bu)} \rightarrow \overline{B_{\rho}(\bu)}$ is a
contraction with constant $\kappa(\rho)<1$. Moreover, the linear
operator $M$ is injective and therefore $F=0$ has a unique solution in
$\overline{B_{\rho}(\bu)}$.
\end{thm}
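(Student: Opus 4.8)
The plan is to verify the two classical hypotheses of the contraction mapping theorem for $T$ on the complete metric space $\overline{B_\rho(\bu)}$ and then to extract injectivity of $M$ as a byproduct. First I would check that $T$ maps $\overline{B_\rho(\bu)}$ into itself. Given $w\in\overline{B_\rho(\bu)}$, I would write $T(w)-\bu = (T(w)-T(\bu)) + (T(\bu)-\bu)$, bound the first term using the mean value inequality along the segment from $\bu$ to $w$ (which lies in the ball by convexity) together with the bound \eqref{eq: Z bound}, getting $\|T(w)-T(\bu)\|\le \kappa(\rho)\|w-\bu\|\le \rho\,\kappa(\rho)$, and bound the second term by $\eps$ via \eqref{eq: eps bound}. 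Then $\|T(w)-\bu\|\le \eps + \rho\,\kappa(\rho) < \rho$ by \eqref{eq: condition contration 3}, so $T(w)\in\overline{B_\rho(\bu)}$. Note this inequality also forces $\kappa(\rho)<1$, since otherwise $\eps+\rho\kappa(\rho)\ge\rho$.

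Next I would check the contraction property: for $w_1,w_2\in\overline{B_\rho(\bu)}$, again using convexity of the ball and the mean value inequality with \eqref{eq: Z bound}, $\|T(w_1)-T(w_2)\|\le \kappa(\rho)\|w_1-w_2\|$ with $\kappa(\rho)<1$. Since $\overline{B_\rho(\bu)}$ is a closed subset of a Banach space, hence complete, the Banach fixed point theorem yields a unique $u_*\in\overline{B_\rho(\bu)}$ with $T(u_*)=u_*$, i.e. a unique solution of \eqref{eq: fixed point} in the ball.

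It then remains to translate this into a statement about $F=0$. By construction \eqref{eq: fixed point}, $T(u)=u$ is equivalent to $MF(u)=0$. Here I would invoke the hypothesis of Theorem~\ref{thm: contraction 3} together with the standing assumption from the discussion preceding it that $M$ is injective; under that assumption $MF(u_*)=0$ is equivalent to $F(u_*)=0$, so $u_*$ is the unique zero of $F$ in $\overline{B_\rho(\bu)}$. If instead one wants injectivity of $M$ to be a genuine \emph{conclusion}, the natural route is to observe that $I-DT(\bu) = -M\,DF(\bu)$ (formally differentiating $T(u)=u+MF(u)$), and that $\|DT(\bu)\|\le\kappa(\rho)<1$ makes $I-DT(\bu)$ boundedly invertible by the Neumann series; an operator of the form $M\,DF(\bu)$ being invertible forces $M$ to be injective on the range of $DF(\bu)$, and with the semilinear structure $F=L+N$, $M=L^{-1}$ this gives injectivity of $M$ outright.

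The main obstacle I anticipate is not in the contraction estimates — those are routine — but in the careful justification of the mean value inequality for the (a priori only $C^1$, possibly unbounded-origin) operator $T$ on the ball, i.e. ensuring that the segment between two points of $\overline{B_\rho(\bu)}$ stays in the domain where $DT$ is defined and that \eqref{eq: Z bound} is a genuine supremum bound along it; this is exactly where convexity of the ball is used and where the smoothing role of the preconditioner $M$ (making $MF$ a well-defined $C^1$ map even when $F$ is unbounded) is essential. The secondary subtlety is the precise logical status of the injectivity of $M$ — whether it is assumed or derived — which should be stated cleanly to avoid circularity.
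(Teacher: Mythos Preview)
Your proposal is correct and follows essentially the same route as the paper: the same triangle-inequality decomposition $T(w)-\bu=(T(w)-T(\bu))+(T(\bu)-\bu)$ for the self-map, the same mean-value estimate along segments in the convex ball for the contraction, and the same Neumann-series argument (from $\|DT(\bu)\|\le\kappa(\rho)<1$ making $-MDF(\bu)=I-DT(\bu)$ invertible) for the injectivity of $M$. On that last point, the paper does treat injectivity of $M$ as a conclusion rather than an assumption, so your second option is the intended reading; your caveat that strictly speaking this only gives injectivity on the range of $DF(\bu)$ is well-taken, but the paper does not address it further.
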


\begin{proof}
By \eqref{eq: Z bound}, $\| DT(\bu) \| = \| Id - M DF(\bu) \| \le \kappa(\rho)
< 1$.  Using Neumann series, we conclude that $M$ is injective. Hence, fixed
points of $T$ are in a one-to-one correspondence with the zeros of $F$. Given
any $x \ne y \in B_{\bu}(\rho)$, there exists $z=tx+(1-t)y \in B_{\bu}(\rho)$
with $t \in [0,1]$ such that 
\[
\|T(x)-T(y) \| =  \|DT(z) (x-y) \| \le \| DT(z) \| \| x-y \| < \kappa(\rho) \|
x-y \|.
\]

That shows that $T$ is a contraction. It remains to prove that $T$ maps
$\overline{B_{\rho}(\bu)}$ into itself given $x \in \overline{B_{\rho}(\bu)}$.
\[
\|T(x)-\bu \| \le  \| T(x)-T(\bu) \|  + \|T(\bu)-\bu \| 
\le  \kappa(\rho) \| x-\bu \| + \eps \le \eps + \rho \kappa(\rho) < \rho,
\]
follows by \eqref{eq: condition contration 3}. The contraction mapping theorem
yields a unique fixed point $u_*$ of $T$ in $\overline{B_{\rho}(\bu)}$ and
therefore a unique solution of $F=0$ in $\overline{B_{\rho}(\bu)}$.
\end{proof}

Once the bounds \eqref{eq: eps bound} and \eqref{eq: Z bound} are computed, the
main hypothesis of Theorem~\ref{thm: contraction 3} is to verify the existence
of a radius $\rho>0$ such that inequality \eqref{eq: condition contration 3} is
satisfied.  We may use the notion of radii polynomials to find such $\rho>0$.
The philosophy of the radii polynomial approach is to leave the radius $\rho$
of the closed ball $\overline{B_{\rho}(\bu)}$ variable in case that the
nonlinearities of $T$ are polynomials. 

We can derive analytic estimates (using the fact that the function space is a
Banach algebra) and use interval arithmetic computations to obtain a polynomial
bound for the right hand side of \eqref{eq: Z bound} of the form $\kappa(\rho)
= \kappa_1 +  \kappa_2 \rho +  \kappa_3 \rho^2 + \dots + \kappa_N \rho^{N-1}$, where
the coefficients $\kappa_i$ ($i=1,\dots,N$) are nonnegative and typically $N$ is the degree 
of the polynomial nonlinearity in the problem under investigation.  
The term $\kappa_1$ should be small, thanks to the fact
that $T$ is a Newton-like operator at $\bu$ and the term $\eps$ should be small
if $\bu$ is a good enough numerical approximation.  Then, we can define the
{\em radii polynomial} by
\begin{equation} \label{eq:radii_polynomial}
p(\rho) = \sum_{j=2}^N \kappa_j \rho^j + (\kappa_1-1) \rho  + \eps.
\end{equation}
Let 
\[
\mathcal{I} = \{ \rho > 0 ~:~  p(\rho) < 0 \}.
\]
If the hypothesis \eqref{eq: condition contration 3} of Theorem~\ref{thm:
contraction 3} holds, then $\mathcal{I} \neq \emptyset$. This implies that
$\kappa_1-1<0$, as otherwise we would not be able to find $\rho>0$ such that
$p(\rho)<0$. By Descartes' rule of signs, the radii polynomial
\eqref{eq:radii_polynomial} has exactly two positive real zeros that we denote
by $\rho_-<\rho_+$.  This implies that $\mathcal{I}$ is an open interval.

Heuristically, since we aim at finding small radii, that is $0 < \rho \ll 1$,
then for $\rho$ small $p(\rho) \approx \kappa_2 \rho^2 + (\kappa_1-1) \rho  +
\eps$. If $(\kappa_1-1)^2 - 4 \eps \kappa_2 > 0$, then there should exist an
open interval $\cal I = (\rho_-,\rho_+)$ such that for every $\rho \in \cal I$,
$p(\rho)<0$.
This approach yields a {\em continuum} of
balls of the form $\overline{B_{\rho}(\bu)}$ (with $\rho \in \cal I$) that
contain a unique solution of $F=0$.  The existence of the interval $\cal I$
yields information that may be useful. For instance, if we are interested in
localizing the solutions in the smallest possible ball, then the set
$\overline{B_{\rho_-}(\bu)}$ provides that information. On the other hand, we
may  sometimes be interested in getting a large isolating ball containing the
unique solution. In this case, this information would be given  by the set
$\overline{B_{\rho_+}(\bu)}$. Finally, another information that may be useful
is that the set $\overline{B_{\rho_+}(\bu)} \setminus
\overline{B_{\rho_-}(\bu)}$ does not contain any solution of $F=0$. This
provides a set of non existence of solutions of $F=0$.

\subsection{Extra equations}
\label{subsection: extra conditions}

Across the paper we  encounter several times functional equations
that are underdetermined. 
They have many solutions which are nevertheless physically equivalent 
(e.g. when looking for a parameterization of a periodic orbit, we have the choice of any
point on the orbit to correspond to $t=0$ in the parameterization).
So, extra normalizing conditions are needed to obtain a unique solution.
There are several alternatives that can be 
chosen.  Some of them are: 
\begin{itemize}
\item \emph{Fixed point in phase space}: This extra condition 
arises when we force the solution $u$ to lie on a codimension 1 subspace. 
In the evolution 
language, this is equivalent to fixing a Poincar\'e section. 
This condition is stated 
in algebraic terms as 
\begin{equation}
\langle u, v\rangle = 0 
\end{equation}
for some $v$.
Note that we require the scalar product $\langle \cdot, \cdot \rangle$ to be 
a differentiable operator in the Banach space we are considering. 
For example, 
the $L^2$ scalar product satisfies this property in the space 
of differentiable functions. See
Section \ref{section: Banach spaces} 
for a detailed treatment of this fact.                            

\item \emph{Fixed phase}: This extra condition impose that the correction 
$u-\bu$ is perpendicular to the initial guess $\bu$. This is done 
by imposing that 
\begin{equation}
\langle u-\bu, \bu\rangle = 0.
\end{equation}
This condition is useful when we want to eliminate a translation.

\end{itemize}

\subsection{A general eigenvalue-eigenvector equation}
\label{subsection: eigenvalue}

During the exposition we  encounter several times eigenvalue-eigenvectors equations of
the form
\begin{equation}\label{eq: eigenvalue problem}
Lv+Av = \lambda v, 
\end{equation}
where $L$ and $A$ are linear operators such that $L^{-1}A$ is a bounded operator.
This equation is underdetermined, but 
after imposing an extra scalar equation (for example $\langle v, v \rangle = 1$), it 
can be transformed into a fixed point equation for the unknowns $(\lambda, v)$. In more
concrete terms, after adding this last condition we obtain that the pair 
$(\lambda, v)$ satisfies the system of equations
\begin{equation}\label{eq: eigen system}
F(\lambda,v)=
\left(
\begin{array}{cc}
Lv+Av -\lambda v\\ 
\langle v, v \rangle -1
\end{array}
\right)=0.
\end{equation}
In all problems in this paper, the linear operator $L$ is easily invertible,
and satisfies that both $L^{-1}$ and $L^{-1}A$ are continuous operators. Following the formulation 
as in Subsection \ref{subsection: fixed points}, we multiply equation 
\eqref{eq: eigen system} by the operator 
\[M=
\begin{pmatrix}
L^{-1} & 0\\
0 & 1\\
\end{pmatrix}.
\]

\begin{rem}
Notice that the nonlinear (in fact 
quadratic) part of the last system is easily treatable.
\end{rem}

\begin{rem}
In the literature the extra condition in terms of norms is classic, see for
instance \cite{bookKato}. Other choices are also possible. For example, in
\cite{MR3204427} the authors use the condition of fixing the value of one
of the coefficients.  Again, we need that the extra constraint is
differentiable in the Banach space considered. We also need that the condition
given by the extra equation leads to a good preconditioning with (hopefully)
not too large norm.
\end{rem}

\section{Invariance equations and formulation as a fixed point} \label{sec:invariance equation}

In this section we reduce the existence of the dynamical objects to functional
equations and show how to manipulate them in such a way that they are reduced
to a constructive fixed point problem.  The manipulations are largely formal
(we  ignore issues such as the domain of unbounded operators and assume
invertibility when needed, etc). These are addressed when we specify the
spaces.

Unstable manifolds attached to invariant objects, due to the dissipative
character of the evolution equation \eqref{eq: evolution}, are finite
dimensional. In this paper we  present only the case of one-dimensional
attached manifolds. To see how to deal with higher dimensional ones the reader
should consult \cite{Cabre_Fontich_delaLlave_05}.

Along all the presentation of all the problems, we  follow the notation in
Subsection \ref{subsection: fixed points}, we  specify the operator $F$ and the
linear preconditioner $M$. If necessary, we  discuss the nonlinear term
$N_{\bu}$.

\subsection{Equilibrium points}
\label{sec:equilibrium}

In this section, we show how to use our framework to compute equilibrium points
of an evolution equation.  The example to keep in mind is that of a parabolic
PDE.  In this case, the equilibrium point is the solution of an elliptic PDE.
There is an extensive literature on the a posteriori verification of solutions
of elliptic PDE, notably for finite element discretizations (see e.g.
\cite{MR2019251,MR1810529,MR1849323,MR2109916} and the references therein).
The presentation in this section is more geared towards spectral
discretizations, which is also used for the other objects we  discuss later,
and to obtain computer-assisted proofs in very smooth norms.

A function $u$ is an equilibrium (time independent solution) of \eqref{eq:
evolution} if
\begin{equation}
\label{eq: equilibrium}
F(u) = Lu+N(u) = 0.
\end{equation}
In our applications, $L$ is an elliptic operator, $N$ is a nonlinear
differential operator of smaller order than $L$, and $L^{-1}N$ is a compact
operator when considered acting on appropriate spaces. Hence, the operator $M$
considered in Subsection \ref{subsection: fixed points} is $-L^{-1}$.  A
discussion of suitable spaces appears in Section~\ref{section: Banach spaces}.
If the operator is compact then finite dimensional approximations 
have a good chance of working.

In some cases, the operator $L$ is not invertible but $L+\lambda I$ is (this
happens for instance for elliptic operators and for most $\lambda\in\mathbb
C$).  Moreover, adding the parameter $\lambda$ may be numerically advantageous
even if $L$ is invertible since the operator $MF + I$ may be more numerically
stable, with $M = (L + \lambda I)^{-1}$.  Indeed, $(L + \lambda I)^{-1}$ may be
easier to reliably compute than $L^{-1}$.

In practical applications, further preconditioning by finite dimensional
matrices may be advantageous. Specially when the operator $L^{-1} N$ is
compact. 

\subsection{Traveling waves}

Traveling waves are solutions of \eqref{eq: evolution} of the form $u(x, t) =
\phi(x-ct)$ ($x$ can be $n$ dimensional but the displacement $c$ is one
dimensional). Hence, to find a traveling wave (using that the operators $L$ and
$N$ are time independent) we obtain an operator equation for $c$ and $\phi$ of
the form
\begin{equation}
\label{eq: traveling wave}
c\phi' +L\phi+N(\phi) = 0.
\end{equation}
This equation is underdetermined, since leaving $c$ unmodified and shifting
$\phi$ along $x$ also gives a solution of \eqref{eq: traveling wave}. So we add
the extra scalar condition $\langle \phi, \phi \rangle = 1$ to obtain the
system of equations
\begin{equation*}
F(\phi,c) = 
\left(
\begin{array}{cc}
c\phi' +L\phi+N(\phi) \\
\langle \phi, \phi \rangle -1
\end{array}
\right) =0.
\end{equation*}
For this equation we obtain that the preconditioner is
\[
M = 
\begin{pmatrix}
(L+c\partial_x)^{-1} & 0\\
0 & 1
\end{pmatrix}.
\]

\begin{rem}
In order to obtain a complete formulation, one should specify the space of
functions to which the function $\phi$ belongs. 

In practice, this method works only for periodic traveling waves.  Clearly,
Dirichlet boundary conditions or odd periodic conditions do not allow
non-trivial traveling waves.  

The problem of traveling  waves on the whole line with conditions at infinity
are not considered here. The study of these \cite{Smoller_book} is
mathematically very challenging since the spectral properties of the
linearization are difficult (e.g. one has continuous spectrum). Moreover, this
study could be conducted as homoclinic orbits
\cite{Mireles_Mischaikow_2013,MR2821596}.
\end{rem}

\begin{rem}
When the space variable $x$ is one-dimensional, the study of existence of
traveling waves reduces to the study of periodic solutions of the one parameter
family of differential equations
\[
c\phi' +L\phi+N(\phi) = 0,
\]
and hence ODE techniques can be applied.
\end{rem}

\subsection{Periodic orbits}

Periodic orbits are solutions of equation \eqref{eq: evolution} of the form
$u(x, t)=u(x, t+T)$, with $T > 0$. The period $T$ is also an unknown of the
problem. To write down the functional equation $(u, T)$ satisfy, we parametrize
them by $v(x, \theta) = u(x, \frac\theta a)$, obtaining  
\begin{equation}\label{eq: periodic orbit}
a\partial_\theta v = Lv+N(v), 
\end{equation}
where $0\leq \theta \leq 2\pi$ and $v(x, \theta+2\pi)=v(x, \theta)$.

Notice that the frequency $a$ is now another unknown of the problem equivalent
to $T$. Hence, we proceed again as in the traveling wave case and add an extra
scalar equation.  The most suitable one is fixing the phase of the
parameterization. 

More concretely, given an approximate solution $(\bar{a}, \bar{v})$ of 
equation \eqref{eq: periodic orbit}, we look for a correction $(\alpha, w)$ satisfying 
the system of equations
\begin{equation}
\label{eq: system periodic orbit}
F(\alpha,w)=
\left(
\begin{array}{cc}
(\bar{a}+\alpha) \partial_\theta (\bar{v} + w) - L(\bar{v} + w)-N(\bar{v} + w)  \\
\langle \partial_\theta \bar{v}, w \rangle  
\end{array}
\right)=0.
\end{equation}

A good preconditioner for equation \eqref{eq: system periodic orbit} is 
\[
M=
\begin{pmatrix}
(\bar{a} \partial_\theta-L)^{-1} & 0\\
0 & 1
\end{pmatrix}.
\]

\subsection{One dimensional manifolds attached to fixed points} \label{sec:1Dmanifold_to_fixed_pt}

Given a fixed point $u_0$ that satisfies 
equation \eqref{eq: equilibrium}, we are seeking 
a function $U(s,x)$ that satisfies
\begin{equation}\label{eq: attached fixed point}
\lambda s \partial_s U(s,x)  = L U(s,x) + N(U(s,x)),
\end{equation}
where $L$ and $N$ are differential operators in $x$.

It is clear that if we find  a 
solution that satisfies 
\eqref{eq: attached fixed point}, 
then
\begin{equation}\label{eq:evolutionparameter}
u(t,x) = U(e^{\lambda t}, x) 
\end{equation}
satisfies the evolution equation  \eqref{eq: evolution}.

We can think geometrically of $U$ as providing a parameterization of a one
dimensional manifold invariant under the semiflow. For each value of $s$, $U(s,
\cdot)$ is a point in a function space.  Hence, as $s$ traces an interval, we
obtain a line segment in the function space.  The dynamics of the evolution
equation \eqref{eq: evolution} moves along the curve with exponential dynamics.

At first \eqref{eq: attached fixed point} may looks similar to 
equation \eqref{eq: equilibrium} for functions 
of two variables. We can indeed write it 
as 
\[
\tilde L U + N(U) = 0 
\]
where $\tilde L = - \lambda s \partial_s + L $. 
Nevertheless, there are two 
important differences: 
\begin{itemize}
\item 
We emphasize that in \eqref{eq: attached fixed point}, the unknowns are both
the (real) number   $\lambda$ and the function of two variables $U$. 
\item 
We also note that the equation  \eqref{eq: attached fixed point} does not admit
a unique solution. If $(\lambda, U) $ is a solution of \eqref{eq: attached
fixed point}, and $\sigma$ is any number, then $(\lambda, U_\sigma)$, with
$U_\sigma(s, x) = U(s\sigma, x)$, is also a solution of \eqref{eq: attached
fixed point}.
\end{itemize}

Despite the above important differences, we are able to reduce the problem to a
fixed point problem, but we  need some extra steps.  Our first goal is to use
the techniques already developed in Sections~\ref{subsection: eigenvalue} and
\ref{sec:equilibrium} to obtain approximate solutions of \eqref{eq: attached
fixed point}. In particular, we  have to find  $\lambda$.  Then,  we  show how
to start a fixed point algorithm based on these approximate solutions. See the
following subsections for a discussion of the method.

More detailed and more general  results along these lines appear in
\cite{delaLlave97,Cabre_Fontich_delaLlave_03,Cabre_Fontich_delaLlave_05}. In
particular,  there are results for manifolds of dimension higher than one.

\subsubsection{Approximate solutions of the parameterization
equation up to first order} 

Proceeding heuristically for the moment, we seek 
$U$ as a power series in $s$ with coefficients which are functions of
$x$
\[
U(s,x) = \sum_{n \ge 0} u_n(x) s^n.
\]
We also assume that the operator $N$ is differentiable. Of course, all this 
needs to be verified later with the appropriate definitions of Banach spaces, 
but in this section, we just describe the numerical algorithm. 

In a first step, we obtain that $u_0$ should 
satisfy the equilibrium  equation \eqref{eq: equilibrium} discussed in Section~\ref{sec:equilibrium}. 
In the second step, we obtain that the coefficient  $u_1$  and $\lambda$ satisfy
\begin{equation}\label{eq:firstorder}
\lambda u_1 = L u_1 + DN(u_0) u_1 .
\end{equation}

Equation \eqref{eq:firstorder}  is precisely an eigenvalue equation of the type discussed in 
Section~\ref{subsection: extra conditions}. 
Picking an eigenvalue determines uniquely the manifold to be computed. While 
\eqref{eq:firstorder} determines $\lambda$ uniquely, $u_1$ is only determined up to a multiple. 

As it turns out, this choice of the multiple of the eigenvector corresponds 
to the non-uniqueness pointed out in Section~\ref{subsection: extra conditions}. Once we fix
the length of the eigenvector, the (local) uniqueness is settled. This can be used as
a normalization to obtain uniqueness. 

\subsubsection{Approximate solutions of the parameterization
equation to order higher than one%. Resonances
}
\label{sec:resonances}

If we continue solving equation \eqref{eq: attached fixed point}
to orders (in $s$) higher than one, we see 
that equating the coefficients of $s^n$ on both sides of \eqref{eq: attached fixed point}
we obtain
\begin{equation}\label{ordern} 
n \lambda u_n = (L + DN (u_0) )u_n  + R_n(u_0,u_1, \ldots, u_{n-1}),
\end{equation} 
where $R_n$ is an expression (polynomial in $u_1,\ldots , u_{n-1}$) 
which can be readily computed substituting the expansion already computed in the
Taylor expansion of $N$. See \cite{Haro_survey} for a detailed exposition 
on the numerical aspects.

If 
\begin{equation} 
\label{eq:nonresonance} 
n \lambda \notin {\rm Spec}(L + DN(u_0)),
\end{equation}
 one can find $u_n$ and proceed. On the other hand, if \eqref{eq:nonresonance}
 fails and $n \lambda \in {\rm Spec}(L + DN(u_0))$, it is not hard to construct
 examples of non-linearities $N$ for which the invariance equation \eqref{eq:
 attached fixed point} has no solution.

For the Kuramoto-Sivashinsky equation \eqref{eq: KS}, the spectrum of $L +
DN(u_0)$ accumulates to $-\infty$, hence a negative $\lambda$ is rather
problematic.  On the other hand, positive $\lambda$ are tractable because the
spectrum of $L + DN(u_0)$ only contains a finite number of positive
eigenvalues.  In particular, the most unstable eigenvalue automatically
satisfies the non-resonance condition \eqref{eq:nonresonance}. 

\subsubsection{Preconditioning the fixed point equation}

Given the periodic orbit $u_0$ and the eigenpair $(\lambda, u_1)$, we can write
the functional equation \eqref{eq: functional} for the high order
$s^2g(x,s)=U(x,s)-u_0(x)-u_1(x)s$ as
\begin{equation*}
F(g)=\lambda s \partial_s (u_0+u_1 s+ g s^2) - L(u_0+u_1s+g s^2)-N(u_0+u_1s+g
s^2) = 0.
\end{equation*}
Then, we  apply the existence theorems of Section \ref{section: fixed points}
with the approximation of $g$ computed using the method of
Section~\ref{sec:resonances} and with preconditioner 
\[
M= (\lambda s \partial_s-L)^{-1}.
\]

\begin{rem}
As stated in Section~\ref{sec:resonances}, the existence of the
parameterization of the invariant manifold depends on the absence of resonances
in \eqref{eq:nonresonance}.  This absence is equivalent to finding the
operator $B$ required in Theorem \ref{thm: contraction 2}. In general, the
proof of absence of resonances is a delicate question, since we should prove
that $n\lambda$ does not accumulate in the spectrum. There are however easier
cases, for instance in the case of the fast (strong) unstable manifold.
\end{rem}

\subsection{One dimensional invariant manifolds attached to periodic orbits}

Similar to the case of invariant manifolds attached to fixed points, manifolds
attached to periodic orbits are parameterized by $U(x, \theta, s) = u_0(x,
\theta)+\sum_{n\geq 1}u_n(x, \theta)s^n$, where $u_0(x, \theta)$ is the
parameterization of the periodic orbit, hence satisfying equation \eqref{eq:
periodic orbit}. The functional equation for $U(x, \theta, s)$ is 
\begin{equation}\label{eq: attached periodic orbit}
a \partial_\theta U(x, \theta, s)+\lambda s \partial_s U(x, \theta, s) = L U(x,
\theta, s)+N(U(x, \theta, s)).
\end{equation}

As in Section~\ref{sec:1Dmanifold_to_fixed_pt}, the eigenpair $(\lambda, u_1)$
satisfies a linear equation, 
\begin{equation*}
a \partial_\theta u_1+\lambda u_1 = L u_1+DN(u_0)u_1,
\end{equation*}
and it is solved as explained in Section \ref{subsection: eigenvalue}. 

Then, once the eigenpair $(\lambda, u_1)$ is determined, the term
$ g(x,\theta, s)=\sum_{n \geq 2} u_n(x, \theta) s^{n-2} $
satisfies the functional equation 
\begin{equation*}
F(g)=
a \partial_\theta (u_0+ u_1s+ g s^2)+
\lambda s \partial_s (u_0+u_1s+ g s^2 )
- L 
(u_0+ u_1 s +g s^2) 
-N(u_0+ u_1s+ g s^2) = 0.
\end{equation*}
This equation is then preconditioned by multiplying it 
by the linear operator 
\[
M=(a \partial_\theta+\lambda s \partial_s-L)^{-1}.
\]

Again, we refer to \cite{delaLlave97, Cabre_Fontich_delaLlave_03,
Cabre_Fontich_delaLlave_05} for results on higher dimensions in this case.

\section{Functional analysis considerations}
\label{section: Banach spaces}

In this section we present some spaces which are useful to perform the
analysis. The choice of spaces is dictated by the need to have effective
estimates for the operators we are considering.  We want to make explicit the
considerations that lead to the choices and provide several elementary lemmas.
There are other choices which are also possible which may be preferable if the
considerations change. We  also present some of these possible choices and,
indeed present some of the shortcomings of the different choices. 

In the exposition we consider the simplest case of functions of one variable.
The case of functions of several variables can be reduced to this one by
expanding the functions in Fourier (or Taylor) series with respect one of the
variables and considering the coefficients of this expansion as functions of
the other variables. This process is recursive and very easy to implement 
in computer languages which support overloading. 

\subsection{Choice of basis }
We consider expansions in Fourier (or Taylor) series of eigenfunctions
$\phi_n(x)$ of $L$. In practice Fourier series of functions satisfy the
boundary and symmetry conditions.  For our purposes these are useful when
dealing with differential operators. Indeed, since the constant coefficients
differential operators are diagonal it is easy to estimate truncations. 

Recent years have witnessed several successful attempts in combining Fourier
series with computer-assisted estimates to prove existence of solutions of
finite and infinite dimensional dynamical systems. Rigorous results about
periodic solutions of ODEs, delay equations and PDEs
\cite{MR2166710,MR2592879,Piotr_KS_periodic_orbits,MR2788972,HLM, ArioliKoch2},
dynamics of infinite dimensional maps \cite{MR2067140,MR3124898}, equilibria of
PDEs \cite{MR2470145,MR2679365,MR2718657,MR2776917,MR3077902,MR1699017} and
global dynamics of parabolic PDEs \cite{MR2136516,MR2441958} have been obtained
using Fourier series. By now there are many efficient implemented algorithms
handling Fourier series. In fact, there is specialized hardware (GPU) carrying
out operations with Fourier coefficients. The main (well known) shortcoming of
Fourier series is that they are not adaptive and that they are computationally
hard if one is dealing with phenomena that present singularities or high
oscillations in small regions of the phase space.

There is an extensive literature on the a posteriori verification of solutions
of elliptic PDE, notably for finite element discretizations
\cite{Ainsworth_Oden_2000, Kim_Park_2010, Carstensen_Merdon_2011, Ern_2011,
Watanabe_Kinoshita_Nakao_2013b,
Watanabe_Kinoshita_Nakao_2013,MR2019251,MR1810529,MR1849323,MR2109916}.  Among
them, methods involving rigorous enclosure of eigenvalues of nonlinear
operators are used in \cite{MR2019251,MR1810529} to prove existence of
solutions of second-order elliptic boundary value problems.  In
\cite{MR1849323,MR2161437}, a technique combining the Schauder fixed point
theorem and a priori error estimates for finite element approximations in
Sobolev spaces is applied to prove the existence of solutions of elliptic
problems. 

Splines interpolations have also led to rigorous existence results for PDEs
\cite{JavierGomez-Serrano}, boundary value problems \cite{MR1639986},
connecting orbits \cite{MR2821596,MR3207723,MR3207723} and localized solutions
of reaction-diffusion PDEs \cite{vdBGW}.

There are, several works on theoretical estimates \cite{Cohen_book} or on
numerical work with wavelets \cite{Wickerhauser_2002} but we are not aware of
rigorous computer-assisted estimates. 

Interpolation with Chebyshev polynomials has received a great deal of attention
\cite{Chebfun} and there are computer-assisted estimates
\cite{Lessard_Reinhardt_2014,correc_lessard,BDLM}. Note that Chebyshev
polynomials are closely related to Fourier series. 

Taylor methods have led to rigorous integrators of flows of ODEs
\cite{MR1652147,MR1962787,MR2312391,MR1930946} and rigorous enclosure of
invariant manifolds \cite{Mireles_Mischaikow_2013,JDMJ01,mirelesCNSNS_2014}
leading to effective computation of connecting orbits.

\subsection{Choices of norms} 

We  consider spaces of functions (indexed by the parameter  $\mu$)
\[
u(x)=\sum a_n \phi_n(x)
\]
with 
\[
\|u\|_\mu = \sum |a_n| W_n^\mu
\]
for conveniently chosen weights $W_n^\mu$. We say that 
this space is a {\emph weigthed $\ell^1$ norm.}

There are several reasons to take weighted $\ell^1$ spaces rather than other
spaces such as  weighted $\ell^2$, which have interesting properties such as
being a Hilbert space or other $\ell^p$ spaces, which have nice properties such
as being reflexive Banach spaces. 

\begin{itemize}
\item
The norms can be evaluated more reliably in finite precision computation
and in interval arithmetic than those involving higher powers. 

The main reason is that the round off is much more dramatic when we are adding
modes of different sizes. Since the squares of numbers have a bigger spread of
sizes than the numbers themselves, adding numbers is less affected by rounding
than adding their squares. 

\item
Sensitivity to round  off is not the only numerical criterion. 
Note that while the $\ell^\infty$ norm is insensitive to the roundoff, it is
insensitive to many changes in the function. 

\item
It is easy to compute numerically the norm of an operator. 

As we  see in \eqref{operatornorm}, the norm of a matrix of size $N$ can be
estimated  rigorously in $N^2$ operations. (The estimates are accurate up to
round off). 

\item 
The space $\ell^1$ is not reflexive, but at least its dual $\ell^\infty$ is
explicit and rather manageable.
\end{itemize}

Some weights that we  consider are of the form 
\[
W_n^{\mu_1, \mu_2} = (1+|n|)^{\mu_1} e^{\mu_2|n|}
\]
for $\mu_1, \mu_2 \geq 0$. Note that the case $\mu_2 > 0$ automatically 
ensures analyticity of the solutions, while the case $\mu_1 > d$ ensures 
that the functions are $C^d$.

With these norms, operator norms are easy to compute. Given an operator $T$
with coefficients $T_{n, m}$ associated to the base $\left\{\phi_n\right\}$,
its norm is  

\begin{equation}
\label{operatornorm} 
\|T\|_{\mu\rightarrow \mu'} = 
\sup_{m} \dfrac{\sum_{n}
|T_{n,m}|W_{n}^\mu}{W_m^{\mu'}}
.
\end{equation}
When the target space coincides with the definition space, we  denote 
$\|T\|_{\mu\rightarrow \mu}$ by $\|T\|_\mu$.

Sometimes there is the need to compute the norm of the composition of two
operators, let us say $A$ and $B$, such that $A$ is a compact operator and $B$
is unbounded, but with $AB$ bounded. Suppose that the norm of both operators
can be computed but the norm of their composition cannot. Then, it is useful to
use \emph{the two spaces approach} appearing in \cite{Henry81}. This consists
on bounding the norm $\|AB\|_\mu$ by the product of the norms
$\|A\|_{\mu'\rightarrow\mu}$ and $\|B\|_{\mu\rightarrow\mu'}$.

Another space that we consider for the solutions 
\[
u(x) = \sum a_n \phi_n(x)  %\;\;\;  x \in \mathbb{R}^d,
\]
is $\ell^\infty$ with a weighted norm. More precisely, given a sequence $a = \{
a_n \}_{n}$ and a decay rate $s > 1$, we define the norm
\[
\| a \|_s = \sup_{n} \left\{ |a_n| \omega_n^s \right\},
\]
where the weight $\omega_n^s$ is defined by %$\omega_n^q = \prod_{i=1}^d {
$\omega_{n_i}^{q_i}$, with
\[
\omega_{n}^{s} =
\begin{cases}
~1, & \text{if} ~ n = 0 \\
| n |^{s}, & \text{if} ~ n \neq 0.
\end{cases}
\]

Using this norm we define the Banach space
\[
X = \{ a : \| a \|_s < \infty \},
\]
of algebraically decaying sequences with decay rate $s$. The Banach space
$(X,\| \cdot \|_s)$ with $s>1$ is an algebra under discrete convolution, but it
is not directly a Banach algebra. However, it is possible to use analytic
estimates to obtain $C=C(s)$ such that $\| a * b\|_s \le C(s) \| a \|_s \| b
\|_s$ \cite{MR3125637,MR3077902}. With the new norm $\| \cdot \|_X = C(s) \|
\cdot \|_s$, $(X,\| \cdot \|_X)$ is a Banach algebra under discrete
convolutions.

\section{Implementation of the verification method}
\label{sec:guideline}

In this section we present the general ideas for the implementation of the
computer-assisted verification of the invariant objects, that is, how to solve
in practice problem \eqref{eq: functional}.  Usually the need for proving the
existence of an invariant object comes from the fact that there is evidence of
its existence. This evidence could come from different sources. For example,
some non-rigorous numerical computations, or heuristic perturbative arguments.
Thus, we have an approximation of the object of interest and we want to give a
rigorous proof of its existence.  The main steps for the proof of the existence
of these objects are the following: 

\begin{itemize}
\item Compute a good approximation $\bu$ of the 
invariant object, that is, an approximate solution of \eqref{eq: functional}.
The proof to be produced assures the existence of a nearby invariant object,
with explicit estimates in terms of norms of how close it is to the
approximation $\bu$.

\item Construct the preconditioned equation \eqref{eq: fixed point}. 
The preconditioner $M$ is problem dependent, so we should detect the operator
$L$ that must be preconditioned ($M$ will be its inverse). As stated in all the
cases, the inverse of this operator is easily stated, since it is expected that
it has the form of $P(D_1, \cdots, D_n)$, where $P$ is a polynomial with
constant coefficients and $D_i$ are differential operators of the form
$\partial_x$, $\partial_\theta$, and so on.

\item Once the preconditioned equation \eqref{eq: fixed point} is constructed, 
we apply one of the fixed points Theorems \ref{thm: contraction 2} or 
\ref{thm: contraction 3}. Choosing which one must be used is more a matter 
of taste. Both are equivalent and provide similar qualitative information 
about the fixed point.

\item Finally, for the application of the fixed point theorem one must evaluate
the quantities required in the theorems. These quantities are bounds on the
error of the approximation $F(\bu)$, on the derivative of $T$ and on the
Lipschitz constant of the derivative of $T$ around the approximation. The
evaluation of these three quantities are problem dependent, and must be treated
in a case by case basis. 
\end{itemize}

In general, one must split the problem into two subproblems, a finite
dimensional one and an infinite dimensional one. This splitting is done in
accordance to how the functional equation deals with the finite dimensional
part and the infinite dimensional part.  The finite dimensional part is handled
with the aid of a computer, while the infinite dimensional part must be dealt
with by hand. Although a priori this may sound very challenging, this is
actually a treatable problem. The action of the contraction operator on the
modes of the infinite dimensional part can be managed since, hopefully, the
linearization of the operator is compact. The choice of truncations seems
to be different depending on the problem.

\section{Concluding remarks}
\label{sec:conclusion}

In this paper we present rather general methods for the computation and
rigorous verification of several invariant objects arising in the study of
PDEs.  We specify the functional equations that these invariant objects must
satisfy, and describe how them can be rigorously computed. All these functional
equations have the property that the linearization around their zeros have a
spectral gap around zero. This allows us to set up a contraction mapping problem
that leads to the rigorous verification of the computed objects.

In two companion papers we present two different implementations of the method
presented in this paper to rigorously verify the existence of time periodic
orbits for the Kuramoto-Sivashinsky equation~\eqref{eq: KS}.

\section*{Acknowledgments}
J.-L. F. was partially supported by Essen, L. and C.-G., for mathematical
studies.  M.G. was partially supported by FAPESP grants 2013/07460-7,
2013/50382-7 and 2010/00875-9, and by CNPq grants 305860/2013-5 and
306453/2009-6, Brazil.  R. L. was partially supported by NSF grant DMS-1500493.
We are also greatful for useful discussions to G. Arioli, P. Cvitanovic, J.
Gomez-Serrano, A.  Haro, H. Koch, K.  Mischaikow, W. Tucker and P. Zgliczynski.

\bibliography{./bibliography}{}

\begin{thebibliography}{10}

\bibitem{Ainsworth_Oden_2000}
M.~Ainsworth and J.~T. Oden.
\newblock {\em A posteriori error estimation in finite element analysis}.
\newblock Pure and Applied Mathematics (New York). Wiley-Interscience [John
  Wiley \& Sons], New York, 2000.

\bibitem{MR2679365}
G.~Arioli and H.~Koch.
\newblock Computer-assisted methods for the study of stationary solutions in
  dissipative systems, applied to the {K}uramoto-{S}ivashinski equation.
\newblock {\em Arch. Ration. Mech. Anal.}, 197(3):1033--1051, 2010.

\bibitem{ArioliKoch2}
G.~Arioli and H.~Koch.
\newblock {Integration of dissipative PDEs: a case study}.
\newblock {\em SIAM J. of Appl. Dyn. Syst.}, 9:1119--1133, 2010.

\bibitem{MR2166710}
A.~W. Baker, M.~Dellnitz, and O.~Junge.
\newblock A topological method for rigorously computing periodic orbits using
  {F}ourier modes.
\newblock {\em Discrete Contin. Dyn. Syst.}, 13(4):901--920, 2005.

\bibitem{MR1652147}
M.~Berz and K.~Makino.
\newblock Verified integration of {ODE}s and flows using differential algebraic
  methods on high-order {T}aylor models.
\newblock {\em Reliab. Comput.}, 4(4):361--369, 1998.

\bibitem{Boussinesq}
J.~Boussinesq.
\newblock {Th\'eorie g\'en\'erale des mouvements qui sont propag\'es dans un
  canal rectangulaire horizontal}.
\newblock {\em C. R. Acad. Sci. Paris}, 73:256--260, 1871.

\bibitem{MR3125637}
M.~Breden, J.-P. Lessard, and M.~Vanicat.
\newblock Global {B}ifurcation {D}iagrams of {S}teady {S}tates of {S}ystems of
  {PDE}s via {R}igorous {N}umerics: a 3-{C}omponent {R}eaction-{D}iffusion
  {S}ystem.
\newblock {\em Acta Appl. Math.}, 128:113--152, 2013.

\bibitem{MR2019251}
B.~Breuer, P.~J. McKenna, and M.~Plum.
\newblock Multiple solutions for a semilinear boundary value problem: a
  computational multiplicity proof.
\newblock {\em J. Differential Equations}, 195(1):243--269, 2003.

\bibitem{Cabre_Fontich_delaLlave_03}
X.~Cabr{\'e}, E.~Fontich, and R.~de~la Llave.
\newblock The parameterization method for invariant manifolds. {I}. {M}anifolds
  associated to non-resonant subspaces.
\newblock {\em Indiana Univ. Math. J.}, 52(2):283--328, 2003.

\bibitem{Cabre_Fontich_delaLlave_05}
X.~Cabr{\'e}, E.~Fontich, and R.~de~la Llave.
\newblock The parameterization method for invariant manifolds. {III}.
  {O}verview and applications.
\newblock {\em J. Differential Equations}, 218(2):444--515, 2005.

\bibitem{Carstensen_Merdon_2011}
C.~Carstensen and C.~Merdon.
\newblock Remarks on the state of the art of a posteriori error control of
  elliptic {PDE}s in energy norms in practise.
\newblock {\em Stud. Univ. Babe\c s-Bolyai Math.}, 56(2):273--293, 2011.

\bibitem{CGL_Boussinesq}
R.~Castelli, M.~Gameiro, and J.-P. Lessard.
\newblock Rigorous numerics for ill-posed {PDE}s: periodic orbits in the
  {B}oussinesq equation.
\newblock Submitted, 2015.

\bibitem{MR3204427}
R.~Castelli and J.-P. Lessard.
\newblock A method to rigorously enclose eigenpairs of complex interval
  matrices.
\newblock In {\em Applications of mathematics 2013}, pages 21--31. Acad. Sci.
  Czech Repub. Inst. Math., Prague, 2013.

\bibitem{Cohen_book}
A.~Cohen.
\newblock {\em Numerical analysis of wavelet methods}, volume~32 of {\em
  Studies in Mathematics and its Applications}.
\newblock North-Holland Publishing Co., Amsterdam, 2003.

\bibitem{Cohenetaltri1976}
B.~Cohen, J.~Krommes, W.~Tang, and M.~Rosenbluth.
\newblock Nonlinear saturation of the dissipative trapped-ion mode by mode
  coupling.
\newblock {\em Nucl. Fus.}, 16:971--992, 1976.

\bibitem{Collet_Eckmann_Lanford}
P.~Collet, J.-P. Eckmann, and O.~E. Lanford, III.
\newblock Universal properties of maps on an interval.
\newblock {\em Comm. Math. Phys.}, 76(3):211--254, 1980.

\bibitem{correc_lessard}
A.~Correc and J.-P. Lessard.
\newblock Coexistence of nontrivial solutions of the one-dimensional
  {G}inzburg-{L}andau equation: a computer-assisted proof.
\newblock {\em European J. Appl. Math.}, 26(1):33--60, 2015.

\bibitem{MR2136516}
S.~Day, Y.~Hiraoka, K.~Mischaikow, and T.~Ogawa.
\newblock Rigorous numerics for global dynamics: a study of the
  {S}wift-{H}ohenberg equation.
\newblock {\em SIAM J. Appl. Dyn. Syst.}, 4(1):1--31 (electronic), 2005.

\bibitem{MR2067140}
S.~Day, O.~Junge, and K.~Mischaikow.
\newblock A rigorous numerical method for the global analysis of
  infinite-dimensional discrete dynamical systems.
\newblock {\em SIAM J. Appl. Dyn. Syst.}, 3(2):117--160, 2004.

\bibitem{MR3124898}
S.~Day and W.~D. Kalies.
\newblock Rigorous computation of the global dynamics of integrodifference
  equations with smooth nonlinearities.
\newblock {\em SIAM J. Numer. Anal.}, 51(6):2957--2983, 2013.

\bibitem{MR2338393}
S.~Day, J.-P. Lessard, and K.~Mischaikow.
\newblock Validated continuation for equilibria of {PDE}s.
\newblock {\em SIAM J. Numer. Anal.}, 45(4):1398--1424 (electronic), 2007.

\bibitem{delaLlave97}
R.~de~la Llave.
\newblock Invariant manifolds associated to nonresonant spectral subspaces.
\newblock {\em J. Statist. Phys.}, 87(1-2):211--249, 1997.

\bibitem{Llave08}
R.~de~la Llave.
\newblock A smooth center manifold theorem which applies to some ill-posed
  partial differential equations with unbounded nonlinearities.
\newblock {\em J. Dynam. Differential Equations}, 21(3):371--415, 2009.

\bibitem{Eckmann_Koch_Wittwer}
J.-P. Eckmann, H.~Koch, and P.~Wittwer.
\newblock A computer-assisted proof of universality for area-preserving maps.
\newblock {\em Mem. Amer. Math. Soc.}, 47(289):vi+122, 1984.

\bibitem{Ern_2011}
A.~Ern and M.~Vohral{\'{\i}}k.
\newblock A unified framework for a posteriori error estimation in elliptic and
  parabolic problems with application to finite volumes.
\newblock In {\em Finite volumes for complex applications. {VI}. {P}roblems \&
  perspectives. {V}olume 1, 2}, volume~4 of {\em Springer Proc. Math.}, pages
  821--837. Springer, Heidelberg, 2011.

\bibitem{ks_jll_r}
J.-L. Figueras and R.~de~la Llave.
\newblock {Numerical Computations and Computer Assisted Proofs of Periodic
  Orbits of the Kuramoto-Sivashinsky Equation}.
\newblock Submitted, 2016.

\bibitem{Fontich_delaLlave_Sire_13}
E.~Fontich, R.~de~la Llave, and Y.~Sire.
\newblock A method for the study of whiskered quasi-periodic and
  almost-periodic solutions in finite and infinite dimensional {H}amiltonian
  systems.
\newblock {\em Electron. Res. Announc. Math. Sci.}, 16:9--22, 2009.

\bibitem{MR2718657}
M.~Gameiro and J.-P. Lessard.
\newblock Analytic estimates and rigorous continuation for equilibria of
  higher-dimensional {PDE}s.
\newblock {\em J. Differential Equations}, 249(9):2237--2268, 2010.

\bibitem{MR2776917}
M.~Gameiro and J.-P. Lessard.
\newblock Rigorous computation of smooth branches of equilibria for the three
  dimensional {C}ahn-{H}illiard equation.
\newblock {\em Numer. Math.}, 117(4):753--778, 2011.

\bibitem{MR3077902}
M.~Gameiro and J.-P. Lessard.
\newblock Efficient {R}igorous {N}umerics for {H}igher-{D}imensional {PDE}s via
  {O}ne-{D}imensional {E}stimates.
\newblock {\em SIAM J. Numer. Anal.}, 51(4):2063--2087, 2013.

\bibitem{JavierGomez-Serrano}
J.~Gomez-Serrano.
\newblock {\em {Analytical and Computer-Assisted Proofs in Incompressible
  Fluids}}.
\newblock PhD thesis, Universidad Autonoma de Madrid, 2013.
\newblock Thesis (Ph.D.).

\bibitem{Haro_survey}
A.~Haro, M.~Canadell, J.-L. Figueras, A.~Luque, and J.-M. Mondelo.
\newblock {\em The Parameterization Method for Invariant Manifolds: from
  Rigorous Results to Effective Computations}.
\newblock Applied Mathematical Sciences. Springer International Publishing,
  first edition, 2016.

\bibitem{Henry81}
D.~Henry.
\newblock {\em Geometric theory of semilinear parabolic equations}, volume 840
  of {\em Lecture Notes in Mathematics}.
\newblock Springer-Verlag, Berlin, 1981.

\bibitem{MR1699017}
J.~G. Heywood, W.~Nagata, and W.~Xie.
\newblock A numerically based existence theorem for the {N}avier-{S}tokes
  equations.
\newblock {\em J. Math. Fluid Mech.}, 1(1):5--23, 1999.

\bibitem{HLM}
A.~Hungria, J.-P. Lessard, and J.~D. Mireles-James.
\newblock Rigorous numerics for analytic solutions of differential equations:
  the radii polynomial approach.
\newblock {\em Math. Comp.}, 85(299), 2016.
\newblock To appear.

\bibitem{MR2312391}
T.~Kapela and C.~Sim{\'o}.
\newblock Computer assisted proofs for nonsymmetric planar choreographies and
  for stability of the {E}ight.
\newblock {\em Nonlinearity}, 20(5):1241--1255, 2007.

\bibitem{bookKato}
T.~Kato.
\newblock {\em Perturbation theory for linear operators}.
\newblock Classics in Mathematics. Springer-Verlag, Berlin, 1995.
\newblock Reprint of the 1980 edition.

\bibitem{Kim_Park_2010}
D.~Kim and E.-J. Park.
\newblock A priori and a posteriori analysis of mixed finite element methods
  for nonlinear elliptic equations.
\newblock {\em SIAM J. Numer. Anal.}, 48(3):1186--1207, 2010.

\bibitem{MR2470145}
M.~Kim, M.~T. Nakao, Y.~Watanabe, and T.~Nishida.
\newblock A numerical verification method of bifurcating solutions for
  3-dimensional {R}ayleigh-{B}\'enard problems.
\newblock {\em Numer. Math.}, 111(3):389--406, 2009.

\bibitem{Kuramoto76}
Y.~Kuramoto and T.~Tsuzuki.
\newblock {Persistent propagation of concentration waves in dissipative media
  far from thermal equilibrium}.
\newblock {\em Progr. Theoret. Phys.}, 55:356--369, 1976.

\bibitem{LanChandreCvitanovic2006}
Y.~Lan, C.~Chandre, and P.~Cvitanovi{\'c}.
\newblock Newton's descent method for the determination of invariant tori.
\newblock {\em Phys. Rev. E (3)}, 74(4):046206, 10, 2006.

\bibitem{LanCvitanovic2008}
Y.~Lan and P.~Cvitanovi{\'c}.
\newblock Unstable recurrent patterns in {K}uramoto-{S}ivashinsky dynamics.
\newblock {\em Phys. Rev. E (3)}, 78(2):026208, 12, 2008.

\bibitem{laqueyetaltri1975}
R.~LaQuey, S.~Mahajan, P.~Rutherford, and W.~Tang.
\newblock Nonlinear saturation of the trapped-ion mode.
\newblock {\em Phys. Rev. Lett.}, 34:391--394, 1975.

\bibitem{MR2592879}
J.-P. Lessard.
\newblock Recent advances about the uniqueness of the slowly oscillating
  periodic solutions of {W}right's equation.
\newblock {\em J. Differential Equations}, 248(5):992--1016, 2010.

\bibitem{ks_jp_marcio}
J.-P. Lessard and M.~Gameiro.
\newblock {A posteriori verification of invariant objects of evolution
  equations: periodic orbits in the Kuramoto-Sivashinsky PDE}.
\newblock Submitted, 2016.

\bibitem{MR3207723}
J.-P. Lessard, J.~D. Mireles~James, and C.~Reinhardt.
\newblock Computer assisted proof of transverse saddle-to-saddle connecting
  orbits for first order vector fields.
\newblock {\em J. Dynam. Differential Equations}, 26(2):267--313, 2014.

\bibitem{Lessard_Reinhardt_2014}
J.-P. Lessard and C.~Reinhardt.
\newblock Rigorous numerics for nonlinear differential equations using
  {C}hebyshev series.
\newblock {\em SIAM J. Numer. Anal.}, 52(1):1--22, 2014.

\bibitem{MR2441958}
S.~Maier-Paape, U.~Miller, K.~Mischaikow, and T.~Wanner.
\newblock Rigorous numerics for the {C}ahn-{H}illiard equation on the unit
  square.
\newblock {\em Rev. Mat. Complut.}, 21(2):351--426, 2008.

\bibitem{MR1962787}
K.~Makino and M.~Berz.
\newblock Taylor models and other validated functional inclusion methods.
\newblock {\em Int. J. Pure Appl. Math.}, 4(4):379--456, 2003.

\bibitem{mirelesCNSNS_2014}
J.~D. Mireles~James.
\newblock Computer assisted error bounds for linear approximation of (un)stable
  manifolds and rigorous validation of higher dimensional transverse connecting
  orbits.
\newblock {\em Communications in Nonlinear Science and Numerical Simulation},
  22(1--3):1102--1133, 2015.

\bibitem{JDMJ01}
J.~D. Mireles-James.
\newblock Polynomial approximation of one parameter families of (un)stable
  manifolds with rigorous computer assisted error bounds.
\newblock {\em Indagationes Mathematicae}, 26(1):225--265, 2015.

\bibitem{Mireles_Mischaikow_2013}
J.~D. Mireles~James and K.~Mischaikow.
\newblock Rigorous a posteriori computation of (un)stable manifolds and
  connecting orbits for analytic maps.
\newblock {\em SIAM J. Appl. Dyn. Syst.}, 12(2):957--1006, 2013.

\bibitem{MR1849323}
M.~T. Nakao.
\newblock Numerical verification methods for solutions of ordinary and partial
  differential equations.
\newblock {\em Numer. Funct. Anal. Optim.}, 22(3-4):321--356, 2001.

\bibitem{MR2161437}
M.~T. Nakao, K.~Hashimoto, and Y.~Watanabe.
\newblock A numerical method to verify the invertibility of linear elliptic
  operators with applications to nonlinear problems.
\newblock {\em Computing}, 75(1):1--14, 2005.

\bibitem{Watanabe_Kinoshita_Nakao_2013b}
M.~T. Nakao, T.~Kimura, and T.~Kinoshita.
\newblock Constructive a priori error estimates for a full discrete
  approximation of the heat equation.
\newblock {\em SIAM J. Numer. Anal.}, 51(3):1525--1541, 2013.

\bibitem{MR2109916}
M.~T. Nakao and Y.~Watanabe.
\newblock An efficient approach to the numerical verification for solutions of
  elliptic differential equations.
\newblock {\em Numer. Algorithms}, 37(1-4):311--323, 2004.

\bibitem{Chebfun}
R.~B. Platte and L.~N. Trefethen.
\newblock {Chebfun: A New Kind of Numerical Computing}.
\newblock In {Fitt, A. D.; Norbury, J.; Ockendon, H.; Wilson, E.}, editor, {\em
  Progress in Industrial Mathematics at ECMI 2008}, pages 69--87. Springer,
  2010.

\bibitem{MR1810529}
M.~Plum.
\newblock Computer-assisted enclosure methods for elliptic differential
  equations.
\newblock {\em Linear Algebra Appl.}, 324(1-3):147--187, 2001.
\newblock Special issue on linear algebra in self-validating methods.

\bibitem{Robinson_book}
J.~C. Robinson.
\newblock {\em Infinite-dimensional dynamical systems}.
\newblock Cambridge Texts in Applied Mathematics. Cambridge University Press,
  Cambridge, 2001.
\newblock An introduction to dissipative parabolic PDEs and the theory of
  global attractors.

\bibitem{Sell_You_book}
G.~R. Sell and Y.~You.
\newblock {\em Dynamics of evolutionary equations}, volume 143 of {\em Applied
  Mathematical Sciences}.
\newblock Springer-Verlag, New York, 2002.

\bibitem{Showalter97}
R.~E. Showalter.
\newblock {\em Monotone operators in {B}anach space and nonlinear partial
  differential equations}, volume~49 of {\em Mathematical Surveys and
  Monographs}.
\newblock American Mathematical Society, Providence, RI, 1997.

\bibitem{Sivashinksy77}
G.~Sivashinsky.
\newblock {Nonlinear analysis of the hydrodynamic instability in laminar flames
  {I}. Derivation of basic equations}.
\newblock {\em Acta Astron.}, 4:1177--1206, 1977.

\bibitem{Smoller_book}
J.~Smoller.
\newblock {\em Shock waves and reaction-diffusion equations}, volume 258 of
  {\em Grundlehren der Mathematischen Wissenschaften [Fundamental Principles of
  Mathematical Sciences]}.
\newblock Springer-Verlag, New York, second edition, 1994.

\bibitem{BDLM}
J.~B. van~den Berg, A.~Desch{\^e}nes, J.-P. Lessard, and J.~D. Mireles~James.
\newblock Stationary coexistence of hexagons and rolls via rigorous
  computations.
\newblock {\em SIAM J. Appl. Dyn. Syst.}, 14(2):942--979, 2015.

\bibitem{vdBGW}
J.~B. van~den Berg, C.~M. Groothedde, and J.~F. Williams.
\newblock Rigorous computation of a radially symmetric localized solution in a
  {G}inzburg-{L}andau problem.
\newblock {\em SIAM J. Appl. Dyn. Syst.}, 14(1):423--447, 2015.

\bibitem{MR2821596}
J.~B. van~den Berg, J.~D. Mireles-James, J.-P. Lessard, and K.~Mischaikow.
\newblock Rigorous numerics for symmetric connecting orbits: even homoclinics
  of the {G}ray-{S}cott equation.
\newblock {\em SIAM J. Math. Anal.}, 43(4):1557--1594, 2011.

\bibitem{Watanabe_Kinoshita_Nakao_2013}
Y.~Watanabe, T.~Kinoshita, and M.~T. Nakao.
\newblock A posteriori estimates of inverse operators for boundary value
  problems in linear elliptic partial differential equations.
\newblock {\em Math. Comp.}, 82(283):1543--1557, 2013.

\bibitem{Wickerhauser_2002}
M.~V. Wickerhauser.
\newblock Advances in wavelet algorithms and applications.
\newblock In {\em Wavelet analysis ({H}ong {K}ong, 2001)}, volume~1 of {\em
  Ser. Anal.}, pages 289--310. World Sci. Publ., River Edge, NJ, 2002.

\bibitem{MR1639986}
N.~Yamamoto.
\newblock A numerical verification method for solutions of boundary value
  problems with local uniqueness by {B}anach's fixed-point theorem.
\newblock {\em SIAM J. Numer. Anal.}, 35(5):2004--2013 (electronic), 1998.

\bibitem{MR1930946}
P.~Zgliczynski.
\newblock {$C^1$} {L}ohner algorithm.
\newblock {\em Found. Comput. Math.}, 2(4):429--465, 2002.

\bibitem{Piotr_KS_periodic_orbits}
P.~Zgliczy{\'n}ski.
\newblock Rigorous numerics for dissipative partial differential equations.
  {II}. {P}eriodic orbit for the {K}uramoto-{S}ivashinsky {PDE}---a
  computer-assisted proof.
\newblock {\em Found. Comput. Math.}, 4(2):157--185, 2004.

\bibitem{MR2494688}
P.~Zgliczy{\'n}ski.
\newblock Covering relations, cone conditions and the stable manifold theorem.
\newblock {\em J. Differential Equations}, 246(5):1774--1819, 2009.

\bibitem{MR2788972}
P.~Zgliczy{\'n}ski.
\newblock Rigorous numerics for dissipative {PDE}s {III}. {A}n effective
  algorithm for rigorous integration of dissipative {PDE}s.
\newblock {\em Topol. Methods Nonlinear Anal.}, 36(2):197--262, 2010.

\end{thebibliography}
\bibliographystyle{abbrv}

\end{document}